\documentclass[a4paper,reqno,11pt]{amsart}

\usepackage{geometry}
\usepackage{amsfonts,amsmath,amssymb}
\usepackage{hyperref}
\hypersetup{
   colorlinks,
    citecolor=blue,
    filecolor=black,
    linkcolor=blue,
    urlcolor=magenta
}

\newtheorem{theorem}{Theorem}[section]
\newtheorem{lemma}[theorem]{Lemma}

\theoremstyle{definition}

\newtheorem{example}[theorem]{Example}
\newtheorem{remark}[theorem]{Remark}
\newcommand{\norm}[1]{\left\Vert#1\right\Vert}

\allowdisplaybreaks
\numberwithin{equation}{section}

\begin{document}
\title[On a nonhomogeneous heat equation on the complex plane]{On a nonhomogeneous heat equation on the complex plane}

\author{Duong Ngoc Son}
\address{Faculty of Fundamental Sciences,
Phenikaa University, Hanoi, 12116
Vietnam}
\email{son.duongngoc@phenikaa-uni.edu.vn}

\author{Tran Van Thuy}
\address{Faculty of Fundamental Sciences, East Asia University of Technology, Trinh Van Bo, Nam Tu Liem, Hanoi, Vietnam}
\email{thuytv@eaut.edu.vn}

\author{Pham Truong Xuan}
\address{Thang Long Institute of Mathematics and Applied Sciences (TIMAS), Thang Long University,
Nghiem Xuan Yem, Hoang Mai, Hanoi, Vietnam.} 
\email{xuanpt@thanglong.edu.vn} 

\subjclass[2020]{Primary 32W30; Secondary 32W50}
\keywords{heat kernel, well-posedness, stability, complex plane}
\date{July 4, 2024}

\begin{abstract}  
In this article, we investigate the existence, uniqueness, and asymptotic behaviors of mild solutions of a parabolic evolution equations on complex plane, in which the diffusion operator has the form \(\overline{\Box}_{\varphi} = \overline{D}\, \overline{D}^{\ast}\), where \(\overline{D} f = \bar{\partial}f + \varphi_{\bar{z}} f\), the function \(\varphi\) is smooth and subharmonic on \(\mathbb{C}\), and \(\overline{D}^{\ast}\) is the formal adjoint of \(\overline{D}\). Our method combines certain estimates of heat kernel associating with the homogeneous linear equation of Raich \cite{raich06} and a fixed point argument. 
\end{abstract}

\maketitle

\tableofcontents

\section{Introduction}

In an interesting paper \cite{christ}, Christ studied the differential operator on \(\mathbb{C}\)
\[ \overline{D} f = \frac{\partial}{\partial \bar{z}} + \frac{\partial \varphi}{\partial \bar{z}} f, \]
assuming that \(\varphi\) is a subharmonic and non-harmonic function. This operator is closely related to the (weighted) Cauchy--Riemann operator \(\mathbb{C}\). For example, it is immediate that \(\overline{D} f = 0\) if and only if \(e^{\varphi}f\) is holomorphic. In fact, we have the following relation
\[ 
\overline{D} = e^{-\varphi} \frac{\partial}{\partial \bar{z}} e^{\varphi}.
\]

The motivation for Christ's work comes from at least two sources: the weighted \(\bar{\partial}\)-equation on \(\mathbb{C}\) (i.e., solving the nonhomogeneous equation \(\bar{\partial}f = g\) in \(L^2(\mathbb{C}, e^{-2\varphi})\), see, e.g., Berndtsson \cite{berndtsson}) and the tangential \(\bar{\partial}_b\)-equation on the rigid real hypersurface in $\mathbb{C}^2$ 
\[
M _{\varphi} = \{(z,w) \in \mathbb{C}^2 \mid \mathrm{Im}(w) = \varphi(z)\}.
\]
(via a partial Fourier transform, see, e.g., Christ \cite{christ} and Raich \cite{raich06}). Both problems are important in complex analysis and CR geometry and have been studied extensively, see, for instances, H\"ormander \cite{hormander}, Christ \cite{christ}, Raich \cite{raich06}, Haslinger \cite{haslinger}, and the references therein. In particular, Christ's work provides a number of interesting results concerning the equation \(\overline{D} f = g\), especially when the Laplacian \(\Delta \varphi\) is nonnegative but vanishes at some point and, as a measure, satisfies a volume doubling condition, see \cite{christ} for the details.

Our main motivation for this paper comes from an interesting paper of Raich \cite{raich06} who studied the heat equation whose diffusion term is constructed from \(\overline{D}\). Precisely, Raich consider the equation 
 \begin{equation}\label{e:heateq}
 	\begin{cases}
 		\dfrac{\partial u}{\partial t} + \overline{\Box}_{\varphi} u & = 0, \\
 		u(0,z) & = u_0(z), \quad z\in \mathbb{C},
 	\end{cases}
 \end{equation}
where \(\overline{\Box}_{\varphi} u = \overline{D}\, \overline{D}^{\ast} u\) and  \(\overline{D}^{\ast}\) is the formal self-adjoint of \(\overline{D}\) with respect to the Lebesgue measure on \(\mathbb{C}\). In fact, Raich studied a family of operators \(\overline{D} = \overline{D}_{\tau p}\) by taking \(\varphi = \tau p\), where \(p\) is a fixed subharmonic and nonharmonic function on \(\mathbb{C}\), and \(\tau\) is a positive real parameter. Raich proved a number of fundamental results related to this equation, its heat kernel, and its heat semigroup. Note that the dependence of Raich's estimates in \(\tau\) is important for several aspects, such as for transferring the results from \(\mathbb{C}\) to the real hypersurface \(M_\varphi\).

In this paper, following above mentioned works, we initiate a study of the following parabolic evolution equation
\begin{equation}\label{e:heateq0}
	\begin{cases}
		\dfrac{\partial u}{\partial t} + \overline{\Box}_{\varphi} u & = f(u), \\
		u(0,z) & = u_0(z), \quad z\in \mathbb{C},
	\end{cases}
\end{equation}
where $f(t,u)=|u|^{m-1}u$ and $m>2$. Note that, we can obtain similar results for a more general nonlinearity $f(t,u)$ satisfying 
\begin{equation}\label{e:nonlinearterm}
|f(t,u_1)-f(t,u_2)| \leqslant L|u_1-u_2|\left(  |u_1|^{m-1}+|u_2|^{m-1}\right),  \text{   where   }  L>0.
\end{equation}
{We briefly mention that semilinear equations similar to \eqref{e:heateq0}, which is a model for a reaction-diffusion process, have been studied for a long time especially when the underlying space is \(\mathbb{R}^n\) or a Riemannian manifold. They have important applications to geometry (e.g., to geometric flows on the underlying manifold) and to other areas; see, e.g., Weissler \cite{We1980, We1981}. Therefore, it is desirable to settle some basic questions regarding equation \eqref{e:heateq0} in our complex setting}. In fact, our main aim is to establish the well-posedness of the equation and study the asymptotic behavior of its mild solutions.

{As usual, we denote the space of all continuous functions from $\mathbb{R}_+$ to $L^{m-1}(\mathbb{C})$ by $BC(\mathbb{R}_+, L^{m-1}(\mathbb{C}))$ which is a Banach space equipped with supremum norm, that is 
\[ 
	\|v\|_{BC(\mathbb{R}_+,L^{m-1}(\mathbb{C}))} = \sup_{t \geq 0} \|v(t)\|_{L^{m-1}(\mathbb{C})}.
 \]}To explain our first result, we introduce, for $1<m-1<q<m(m-1)$,  the following Banach space

\begin{equation}\label{e:banachspace}
Y=\left\{v\in BC(\mathbb{R}_+,L^{m-1}(\mathbb{C})): \norm{v}_{BC(\mathbb{R}_+,L^{m-1}(\mathbb{C}))} + \sup_{t>0}t^{\frac{1}{m-1}-\frac{1}{q}}\norm{v(t)}_{L^q(\mathbb{C})} <+\infty  \right\},
\end{equation}
endowed with the norm 
\begin{equation}\label{e:norm}
\norm{v}_Y = \norm{v}_{BC(\mathbb{R}_+,L^{m-1}(\mathbb{C}))} + \sup\limits_{t>0}t^{\frac{1}{m-1}-\frac{1}{q}}\norm{v(t)}_{L^q(\mathbb{C})}.
\end{equation}
Our first result is the well-posedness of the semilinear equation and \eqref{e:heateq0} in~\(Y\), which can be stated as follows.
\begin{theorem}[Well-posedness and polynomial stability]\label{case1}
Let \(\varphi\) be a smooth subharmonic function on \(\mathbb{C}\). Let $1<m-1<q<m(m-1)$ and $u_0\in L^{m-1}(\mathbb{C})$.	The following assertions hold:
	\begin{enumerate}
		\renewcommand{\labelenumi}{(\roman{enumi})}
		
		\item
		If $\norm{u_0}_{L^{m-1}(\mathbb{C})}$ and $\rho>0$ are small enough, then the semilinear equation \eqref{e:heateq0} has a unique mild solution $\hat{u}$ in the ball $\mathbb\mathrm{B}(0,\rho)=\left\{ v\in Y : \norm{v}_{Y} \leqslant \rho  \right\}.$
		
		\item
		The above solution $\hat u$ is \emph{polynomial stable} in the sense that for any other solution $u \in BC(\mathbb{R}_+,L^{q}(\mathbb{C}))$ of \eqref{e:heateq0} with the initial data such that $\norm{u(0)-\hat{u}(0)}_{L^{m-1}(\mathbb{C})}$ is small enough, then we have
		\begin{equation}\label{e:pstable}
			\norm{u(t)-\hat{u}(t)}_{L^{q}(\mathbb{C})}  \leqslant {D} t^{-\left(\frac{1}{m-1}-\frac{1}{q}\right)}\norm{u(0)-\hat{u}(0)}_{L^{m-1}(\mathbb{C})},\quad \forall t>0.
		\end{equation}
	\end{enumerate}
\end{theorem}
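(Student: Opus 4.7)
The plan is to rewrite the semilinear problem \eqref{e:heateq0} in its Duhamel form
\begin{equation*}
\Phi(u)(t) = e^{-t\overline{\Box}_{\varphi}} u_0 + \int_0^t e^{-(t-s)\overline{\Box}_{\varphi}} f(u(s))\, ds,
\end{equation*}
and then to prove that $\Phi$ is a contraction of a small closed ball $\mathrm{B}(0,\rho)\subset Y$. The essential analytic input is the heat-kernel smoothing estimate of Raich \cite{raich06},
\begin{equation*}
\norm{e^{-t\overline{\Box}_{\varphi}} v}_{L^r(\mathbb{C})} \leqslant C\, t^{-(1/p - 1/r)}\norm{v}_{L^p(\mathbb{C})}, \qquad 1\leqslant p\leqslant r\leqslant \infty,
\end{equation*}
which encodes the parabolic smoothing of the semigroup generated by $\overline{\Box}_{\varphi}$ and mimics the two-dimensional heat flow on $\mathbb{C}$.

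For the linear part of $\Phi$, the estimate above applied with $p=m-1$ and $r\in\{m-1,q\}$ bounds $\norm{e^{-t\overline{\Box}_{\varphi}} u_0}_Y$ by $C\norm{u_0}_{L^{m-1}}$, with the weight $t^{\alpha}$, $\alpha:=1/(m-1)-1/q>0$, matching \eqref{e:norm} exactly. For the nonlinear part, H\"older's inequality gives $\norm{f(u(s))}_{L^{q/m}} = \norm{u(s)}_{L^q}^m$; the hypothesis $q<m(m-1)$ is what ensures $q/m<m-1$, so that the smoothing estimate can be applied with $p=q/m$ and $r\in\{m-1,q\}$. Combined with $\norm{u(s)}_{L^q}\leqslant s^{-\alpha}\norm{u}_Y$, this reduces the nonlinear estimate to Beta-type integrals
\begin{equation*}
\int_0^t (t-s)^{-a} s^{-m\alpha}\, ds, \qquad a\in\bigl\{\tfrac{m}{q}-\tfrac{1}{m-1},\; \tfrac{m-1}{q}\bigr\}.
\end{equation*}
The substitution $s=t\sigma$ shows that both integrals converge and scale correctly in $t$ precisely when $m-1<q<m(m-1)$: the first is constant in $t$, matching the $BC(\mathbb{R}_+,L^{m-1})$-bound, while the second is $t^{-\alpha}$ times a constant, exactly cancelling the weight $t^{\alpha}$ in the $Y$-norm. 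One therefore obtains an estimate of the shape $\norm{\Phi(u)}_Y \leqslant C_1\norm{u_0}_{L^{m-1}} + C_2\norm{u}_Y^m$.

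The Lipschitz-type inequality \eqref{e:nonlinearterm} together with H\"older replaces the pointwise identity $|f(u)|=|u|^m$ in the contraction estimate; repeating the same bookkeeping yields
\begin{equation*}
\norm{\Phi(u_1)-\Phi(u_2)}_Y \leqslant C_3\bigl(\norm{u_1}_Y^{m-1}+\norm{u_2}_Y^{m-1}\bigr)\norm{u_1-u_2}_Y.
\end{equation*}
Since $m>2$, one may choose $\rho$ and $\norm{u_0}_{L^{m-1}}$ small enough that $\Phi$ preserves $\mathrm{B}(0,\rho)$ and is a strict contraction there; the Banach fixed point theorem then produces the unique mild solution $\hat u\in\mathrm{B}(0,\rho)$, proving (i).

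For (ii), write $w=u-\hat u$ and apply Duhamel to get
\begin{equation*}
w(t) = e^{-t\overline{\Box}_{\varphi}} w(0) + \int_0^t e^{-(t-s)\overline{\Box}_{\varphi}}\bigl(f(u(s))-f(\hat u(s))\bigr)\, ds.
\end{equation*}
The smoothing estimate with $p=m-1$, $r=q$ gives $\norm{e^{-t\overline{\Box}_{\varphi}} w(0)}_{L^q}\leqslant C\, t^{-\alpha}\norm{w(0)}_{L^{m-1}}$, already the desired rate. Smallness of $\norm{w(0)}_{L^{m-1}}$ and of $\hat u(0)$ also makes $u(0)$ small in $L^{m-1}$, and part (i) together with uniqueness of the fixed point places the given $u$ in a small ball of $Y$; hence $\norm{u(s)}_{L^q},\norm{\hat u(s)}_{L^q}\leqslant \rho\, s^{-\alpha}$. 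Setting $\kappa(t):=\sup_{0<s\leqslant t} s^{\alpha}\norm{w(s)}_{L^q}$ and repeating the nonlinear bookkeeping with \eqref{e:nonlinearterm} yields
\begin{equation*}
\kappa(t)\leqslant C\norm{w(0)}_{L^{m-1}} + C\rho^{m-1}\kappa(t),
\end{equation*}
and for $\rho$ small the second term is absorbed, giving \eqref{e:pstable}. The main obstacle in the whole argument is the delicate exponent bookkeeping around the Beta integrals: the range $m-1<q<m(m-1)$ is used in a sharp way both to apply H\"older to $|u|^{m-1}u$ and to ensure that the time integrals converge with the correct power of $t$; a secondary subtlety in (ii) is that the hypothesis puts $u$ only in $BC(\mathbb{R}_+,L^{q})$, so one must invoke uniqueness from (i) to bootstrap it into $Y$ before the stability estimate can close.
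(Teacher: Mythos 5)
Your proposal follows essentially the same route as the paper's proof: the Duhamel/fixed-point formulation in the weighted space $Y$, the $L^p$--$L^q$ smoothing estimate applied with the same exponent pairs ($m-1\to q$, $m-1\to m-1$, $q/m\to q$, $q/m\to m-1$), the reduction to the same Beta integrals under $m-1<q<m(m-1)$, and for (ii) the same linearized estimate after bootstrapping $u$ into a ball of $Y$. The only cosmetic difference is that the paper closes the stability estimate with a Gronwall step where you use a direct absorption of the $C\rho^{m-1}\kappa(t)$ term, which is an equivalent (arguably cleaner) way to finish.
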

In essence, Theorem~\ref{case1} establishes a well-posedness and a polynomial stability estimate for the semilinear equation \eqref{e:heateq0} for a general subharmonic function \(\varphi\). When \(\varphi\) statisfies a stronger ``convexity'' condition, we obtain, in the second result, a better stability estimate. To state this, we need to recall some geometric objects originated from 
Carnot--Carath\'eodory geometry developed by Nagel--Stein--Wainger \cite{nsw}, as presented in Raich \cite{raich06}. Namely, for a fixed subharmonic (nonharmonic) function \(\varphi\), \(z\in \mathbb{C}\), and \(r > 0\), we write
\begin{equation}\label{key}
	a_{jk}^z = \frac{1}{j!k!}\frac{\partial^{j+k} \varphi}{\partial z^j \partial \bar{z}^k}(z).
\end{equation}
Let
\begin{equation}\label{e:mu}
	\mu(z,r) = \inf_{j,k \geq 1 }\left|\frac{r}{a^z_{jk}}\right|^{1/(j+k)},
\end{equation}
with the convention that \(|r/a^{z}_{jk}| = + \infty\) if \(a^{z}_{jk} = 0\). 
For example, when \(\varphi(z) = |z|^2\), we easily have 
\(\mu(z,r) = \sqrt{r}\). Finally, we define the following quantity
\begin{equation}\label{e:delta}
	\delta(\varphi) = \inf_{\mathbb{C}} \mu(z,1)^{-2} \geq 0.
\end{equation}
Our second result can be stated as follows:
\begin{theorem}[Well-posedness and exponential stability]\label{case2}
Let \(\varphi\) be a subharmonic non-harmonic \emph{polynomial} of \(z\). Then, for a given constant $n>m-1>1$, the following assertions hold:
\begin{enumerate}
\renewcommand{\labelenumi}{(\roman{enumi})}
 
\item
If $\norm{u_0}_{L^{n}(\mathbb{C})}$ and $\rho>0$ are small enough, then the heat equation \eqref{e:heateq0} has a unique mild solution $\hat{u}$ in the small ball $B(0,\rho)$ of the Banach space $BC(\mathbb{R}_+,L^{n}(\mathbb{C}))$.

\item
The above solution $\hat u$ is \emph{exponentially stable} in the sense that for any other solution $u \in BC(\mathbb{R}_+,L^{n}(\mathbb{C}))$ of \eqref{e:heateq0} such that $\norm{u(0)-\hat{u}(0)}_{L^{n}(\mathbb{C})}$ is small enough, then we have
\begin{equation}\label{e:estable}
\norm{u(t)-\hat{u}(t)}_{L^{n}(\mathbb{C})}  \leqslant {\widetilde{D}} e^{-\sigma t}\norm{u(0)-\hat{u}(0)}_{L^n(\mathbb{C})},\quad \forall t>0,
\end{equation}
where $0<\sigma<\widetilde{C}\delta(\varphi)$ and $\delta(\varphi) > 0$.
\end{enumerate}
\end{theorem}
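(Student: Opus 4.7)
My plan is to follow the standard mild-solution/Banach fixed-point scheme used for Theorem~\ref{case1}, but to exploit the exponential smoothing of the heat semigroup available when $\delta(\varphi)>0$. The key \emph{linear} ingredient, taken from Raich \cite{raich06}, is that for a non-harmonic subharmonic polynomial $\varphi$ one has $\delta(\varphi)>0$ and the semigroup $T(t)=e^{-t\overline{\Box}_\varphi}$ satisfies an exponentially decaying $L^p$--$L^q$ bound
\begin{equation*}
\|T(t) v\|_{L^q(\mathbb{C})} \leq C\, e^{-\widetilde{C}\delta(\varphi)\, t}\, t^{-(1/p - 1/q)} \|v\|_{L^p(\mathbb{C})},\qquad t>0,\ 1\leq p\leq q\leq\infty.
\end{equation*}
The presence of the factor $e^{-\widetilde{C}\delta(\varphi)t}$ is the crucial improvement over the setting of Theorem~\ref{case1} and is what ultimately yields \eqref{e:estable}.

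For part~(i), I would apply Banach's contraction principle to the Picard map
\begin{equation*}
\Phi(u)(t)=T(t)u_0+\int_0^t T(t-s)\,f(u(s))\,ds
\end{equation*}
on the ball $B(0,\rho)\subset BC(\mathbb{R}_+, L^n(\mathbb{C}))$. Using $\|f(u)\|_{L^{n/m}}=\|u\|_{L^n}^m$ together with the linear estimate above applied with $(p,q)=(n/m,n)$, and noting that the hypothesis $n>m-1$ forces $(m-1)/n<1$ so that $\int_0^t e^{-\widetilde{C}\delta(\varphi)(t-s)}(t-s)^{-(m-1)/n}\,ds$ is bounded uniformly in $t\ge 0$, one obtains a self-mapping bound of the shape $\|\Phi(u)\|_{BC(\mathbb{R}_+,L^n)}\leq C\|u_0\|_{L^n}+C'\rho^m$. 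A parallel estimate based on the Lipschitz-type condition \eqref{e:nonlinearterm} and H\"older's inequality produces a contraction factor proportional to $\rho^{m-1}$. Choosing $\|u_0\|_{L^n}$ and $\rho$ small enough yields the unique fixed point $\hat u\in B(0,\rho)$.

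For part~(ii), I would set $w=u-\hat u$, use the mild form
\begin{equation*}
w(t)=T(t)w(0)+\int_0^t T(t-s)\bigl[f(u(s))-f(\hat u(s))\bigr]\,ds,
\end{equation*}
fix $\sigma\in(0,\widetilde{C}\delta(\varphi))$, and work in the weighted norm $\|w\|_\sigma:=\sup_{t\ge 0}e^{\sigma t}\|w(t)\|_{L^n}$. The same linear bound combined with \eqref{e:nonlinearterm} and the \emph{a priori} control $\|u(s)\|_{L^n},\|\hat u(s)\|_{L^n}\leq\rho$ leads, after multiplying the mild identity by $e^{\sigma t}$, to
\begin{equation*}
\|w\|_\sigma \leq C\|w(0)\|_{L^n}+C'\rho^{m-1}\|w\|_\sigma\int_0^\infty e^{-(\widetilde{C}\delta(\varphi)-\sigma)\tau}\tau^{-(m-1)/n}\,d\tau.
\end{equation*}
Since $\widetilde{C}\delta(\varphi)-\sigma>0$ and $(m-1)/n<1$, the integral converges, and absorbing the second term on the right for sufficiently small $\rho$ yields exactly \eqref{e:estable}.

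The hard part will not be in the nonlinear machinery, which is routine once the weighted norm $\|\cdot\|_\sigma$ is introduced, but rather in the linear ingredient: verifying that $\delta(\varphi)>0$ for every non-harmonic subharmonic polynomial $\varphi$ and then extracting from Raich's heat-kernel analysis the clean $L^p$--$L^q$ bound with the explicit exponential factor $e^{-\widetilde{C}\delta(\varphi)t}$. Once that estimate is isolated, the Banach fixed-point for~(i) and the Gronwall-type argument for~(ii) proceed in lockstep with the proof of Theorem~\ref{case1}.
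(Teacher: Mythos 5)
Your proposal is correct and follows essentially the same route as the paper: the exponentially decaying $L^{n/m}$--$L^n$ smoothing estimate, a Banach fixed-point argument on $B(0,\rho)\subset BC(\mathbb{R}_+,L^n(\mathbb{C}))$ for part (i), and the weighted quantity $e^{\sigma t}\|u(t)-\hat u(t)\|_{L^n}$ for part (ii). The only (immaterial) difference is that you close part (ii) by absorbing the integral term into the weighted norm for small $\rho$, whereas the paper invokes Gronwall's inequality; both hinge on the same convergent integral $\int_0^\infty \tau^{-(m-1)/n}e^{-(\widetilde{C}\delta-\sigma)\tau}\,d\tau$, and the positivity of $\delta(\varphi)$ for non-harmonic subharmonic polynomials is exactly the point the paper verifies separately (in its Example section) from the Taylor coefficients $a^z_{jk}$.
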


{As briefly mentioned before,} semilinear equations similar to \eqref{e:heateq0} have been studied extensively in various situations including those related to the Laplace--Beltrami operator on Riemannian manifolds, the drifted Laplacian on smooth metric measure spaces, as well as sub-elliptic Laplacian on sub-Riemannian manifolds (e.g., on Heisenberg group), see, e.g., Oka \cite{Oka2018}. However, less was known in our current setting of the \(\overline{\Box}_{\varphi}\) operator on the complex plane. On the other hand, one can consider \eqref{e:heateq0} for  \(\overline{\Box}_{\varphi}\) in the higher dimensional case. In this case, the assumption imposed on the weight function \(\varphi\) should be stronger and often related to the eigenvalues of its complex Hessian; see, e.g., Haslinger \cite{haslinger} and Dall'Ara \cite{dall} where such a condition plays an important role in their studies.  We hope to return to this case in the future.

The rest of this article is organized as follows. In Section~\ref{S2}, we briefly recall several required materials, mostly from Raich \cite{raich06}. We also give a $L^p$--$L^q$ estimate for the heat semigroup, with a standard proof. In Section~\ref{S3}, we give proofs of two main theorems. Finally, in Section~\ref{S4}, we construct two examples to illustrate two different cases appearing in our main results.

\section{Preliminaries}\label{S2}

\subsection{\(\overline{D}\)-operator and the heat equation on \(\mathbb{C}\)}
Let \(\varphi\) be a smooth subharmonic function on \(\mathbb{C}\). Consider the differential operator
\begin{equation}\label{e:dba}
\overline{D} f = \frac{\partial f}{\partial \bar{z}} + \frac{\partial \varphi}{\partial \bar{z}} f.
\end{equation}
By the usual construction, the Laplacian associated to \(\overline{D}\) is then given by
\begin{equation}\label{e:laplacian}
	\overline{\Box}_{\varphi} u = \overline{D}\, \overline{D}^{\ast} u,
\end{equation}
where 
\[
\overline{D}^{\ast}f = -\frac{\partial f}{\partial z} - \frac{\partial \varphi}{\partial z} f
\]
is the formal adjoint of \(\overline{D}\) in $L^2(\mathbb{C}, dA)$, $dA$ being the usual Lebesgue measure on the complex plane. Explicitly,
\begin{equation}\label{key}
\overline{\Box}_{\varphi} u = - \frac{\partial^2 u}{\partial z \partial \bar{z}} - \frac{\partial \varphi}{\partial \bar{z}} \frac{\partial u}{\partial z} + \frac{\partial \varphi}{\partial z} \frac{\partial u}{\partial \bar{z}} + \left(\left|\frac{\partial \varphi}{\partial \bar{z}}\right|^2 + \frac{\partial^2 \varphi}{\partial z\partial \bar{z}}\right) u.
\end{equation}
This operator turns out to be a Schr\"odinger operator with magnetic field and an electric potential, which has been studied in several interesting papers, see, e.g., Haslinger \cite{haslinger2}, Heffner--Haslinger \cite{hh}.  It is essentially self-adjoint on \(C^{\infty}_0(\mathbb{C})\) whose spectral theory has been quite well understood. Generally speaking, spectral properties of \(\overline{\Box}_{\varphi}\) heavily depend on the ``size'' of \(\Delta \varphi\) when \(|z| \to \infty\). An approach to the spectral theory for \(\overline{\Box}_{\varphi}\) is via the \(\bar{\partial}\)-Neumann problem. We also point out that, in fact, \(\overline{\Box}_{\varphi}\) is unitary equivalent to the weighted complex Laplacian on \(L^2(\mathbb{C}, e^{-2\varphi})\), cf. Haslinger \cite{haslinger}

In \cite{raich06}, Raich studied the heat equation \eqref{e:heateq} associated to the Laplacian \eqref{e:laplacian} and established fundamental results regarding this equation, its heat kernel estimates, and its associated heat semigroup. In particular, it follows that if \(u_0\) vanishes as \(|z| \to \infty\), then one has the following formula for the mild solution to the heat equation \eqref{e:heateq} as well as the heat semigroup: At \(z \in \mathbb{C}\) fixed,
\begin{equation}\label{e:sol}
	u(t,z) 
	= 
	e^{-t\, \overline{\Box}_{\varphi}} u_0(z)
	=
	\int_{\mathbb{C}} H(t,z,w) u_0(w) dA(w),
\end{equation}
where \(H(t,z,w)\) is the distribution heat kernel.
Raich \cite{raich06} established the regularity of the heat kernel and obtained strong estimates for it. Precisely, Raich's heat kernel estimate, specializing to the case \(\tau = 1\), is read as follows.
\begin{theorem}[Raich \cite{raich06}]\label{thm:2.1} Let \(\varphi\) be a subharmonic, nonharmonic polynomial and let \(\mu(z,r)\) be defined as in \eqref{e:mu}. If \(n\geq 0\) and \(Y^{\alpha}\) is the product of \(|\alpha|\) operators \(\overline{D}\) or \(\overline{D}^{\ast}\). Then
\begin{equation}\label{eq:heatest}
	\left|\frac{\partial^n}{\partial t^n} Y^{\alpha} H(t,z,w)\right| \leq C  t^{-1-n - |\alpha|/2} \exp\left(-\frac{|z-w|^2}{32t} - C't (\mu(z,1)^{-2} + \mu(w,1)^{-2})\right),
\end{equation}
where \(C , C' > 0\), and \(C'\) can be taken with no dependence on \(n\) and \(\alpha\).
\end{theorem}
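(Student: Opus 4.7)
The plan is to follow the standard heat-kernel strategy for magnetic Schr\"odinger operators in the spirit of Davies, adapted to the Nagel--Stein--Wainger geometry encoded by $\mu(z,r)$. The first observation is that $\overline{\Box}_{\varphi}$ is a magnetic Schr\"odinger operator with magnetic potential $\varphi_{z}$ and electric potential $V = |\varphi_{\bar z}|^{2} + \partial_{z}\partial_{\bar z}\varphi$, essentially self-adjoint and nonnegative on $C_{0}^{\infty}(\mathbb{C})$. The key analytic input is a uniform spectral lower bound of the form
\[
\langle \overline{\Box}_{\varphi} u, u\rangle \;\geq\; c \int_{\mathbb{C}} \mu(z,1)^{-2}\, |u(z)|^{2} \, dA(z),
\]
which I would establish via an IMS-type localization to Nagel--Stein--Wainger balls $B(z_{0},\mu(z_{0},1))$ together with a uniform uncertainty principle on each ball; this is the complex-plane analogue of Fefferman--Phong and is essentially present in Christ \cite{christ}.

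Granted this lower bound, I would first derive the on-diagonal estimate $H(t,z,z)\leq Ct^{-1}$ by applying Nash's inequality to the Dirichlet form of $\overline{\Box}_{\varphi}$, which dominates that of the unweighted Laplacian through diamagnetism. To install the off-diagonal Gaussian factor $\exp(-|z-w|^{2}/32t)$, I would invoke the Davies perturbation trick: for a $1$-Lipschitz real-valued $\psi$ and $\alpha>0$, gauge invariance gives $\mathrm{Re}\bigl(e^{\alpha\psi}\overline{\Box}_{\varphi}e^{-\alpha\psi}\bigr) \geq -C\alpha^{2}$, hence $\|e^{\alpha\psi}e^{-t\overline{\Box}_{\varphi}}e^{-\alpha\psi}\|_{2\to 2} \leq e^{C\alpha^{2}t}$; combining with the on-diagonal bound and optimizing $\alpha$ near $|z-w|/(16t)$ produces the Gaussian kernel estimate.

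To insert the damping factor $e^{-C't(\mu(z,1)^{-2}+\mu(w,1)^{-2})}$, I would split $e^{-t\overline{\Box}_{\varphi}} = e^{-(t/2)\overline{\Box}_{\varphi}}\circ e^{-(t/2)\overline{\Box}_{\varphi}}$ and combine the spectral lower bound with a Feynman--Kac type absorption argument to transfer the pointwise potential well $\mu(\cdot,1)^{-2}$ into damping localized at the endpoints $z$ and $w$. Higher spatial derivatives $Y^{\alpha}H$ and time derivatives $\partial_{t}^{n}H$ are then handled by a bootstrap on the dyadic intervals $(t/2,t)$: because $[\overline{D},\overline{\Box}_{\varphi}]$ and $[\overline{D}^{\ast},\overline{\Box}_{\varphi}]$ are controlled by $\overline{\Box}_{\varphi}$ plus lower-order curvature terms, standard parabolic regularity supplies the extra $t^{-|\alpha|/2-n}$ factor without destroying the Gaussian decay.

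The main obstacle is \emph{uniformity} of the constants $C$ and $C'$ in $z$ and $w$, and in particular the independence of $C'$ from $n$ and $\alpha$ asserted in the theorem. Since the metric balls $B(z,\mu(z,r))$ are anisotropic and their volumes vary nontrivially with $z$, a naive rescaling does not suffice: one must rescale the operator on each Nagel--Stein--Wainger ball, verify that the rescaled weights enjoy $z$-uniform $C^{\infty}$ bounds on the unit ball, and patch the local parametrices by a locally finite covering whose cardinality is controlled by the doubling property of $\Delta\varphi\, dA$. Carrying out this patching while keeping every constant uniform is the technical heart of Raich's original argument in \cite{raich06}.
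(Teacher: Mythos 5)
This statement is not proved in the paper at all: it is imported verbatim from Raich \cite{raich06} (the paper explicitly defers to that reference, including for the more general \(\varphi = \tau p\) version with \(\tau\)-dependence of the constants). So there is no in-paper argument to compare yours against; what can be assessed is whether your sketch would itself constitute a proof, and it does not. It is a reasonable roadmap — magnetic Schr\"odinger structure, a Fefferman--Phong/uncertainty-principle lower bound on Nagel--Stein--Wainger balls, Nash plus the Davies exponential-weight perturbation for the on- and off-diagonal Gaussian bounds, and a commutator bootstrap for \(\partial_t^n Y^{\alpha}\) — and these are indeed the kind of ingredients that appear in Christ \cite{christ} and Raich \cite{raich06}. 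But every load-bearing step is asserted rather than established.

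Two gaps are the most serious. First, the damping factor \(e^{-C't(\mu(z,1)^{-2}+\mu(w,1)^{-2})}\) is a \emph{pointwise} statement at the two endpoints, whereas your spectral lower bound \(\langle \overline{\Box}_{\varphi}u,u\rangle \geq c\int \mu(\cdot,1)^{-2}|u|^2\) only yields decay governed by \(\inf_{\mathbb{C}}\mu(\cdot,1)^{-2}\) after passing to the semigroup; upgrading this to endpoint-localized damping via a ``Feynman--Kac type absorption'' is not routine, because \(\mu(z,1)^{-2}\) is not the electric potential of \(\overline{\Box}_{\varphi}\) but only comparable to it in an averaged (Fefferman--Phong) sense, and the diamagnetic inequality goes the wrong way for lower bounds. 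Second, the claim that \(C'\) can be taken independent of \(n\) and \(\alpha\) — which the theorem singles out and which the application in this paper actually relies on — is exactly the part your bootstrap on dyadic time intervals would tend to degrade (each commutation with \(\overline{D}\), \(\overline{D}^{\ast}\), or \(\partial_t\) typically costs a factor in the exponent unless one tracks the constants with care); you name this as ``the technical heart'' but do not address it. As written, the proposal is an outline of a plausible strategy, not a proof of the stated estimate.
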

We refer to Raich \cite{raich06} for a general version of the estimate where \(\varphi = \tau p\) and whose involving constants also depend on a parameter \(\tau>0\). The heat kernel estimate in Theorem~\ref{thm:2.1} will be of fundamental importance for us.

\subsection{\(L^p\)--\(L^q\) estimates for the heat semigroup} From the heat kernel estimate of Raich (Theorem~\ref{thm:2.1}), we can easily prove the following lemma.

\begin{lemma} Let \(\varphi\) be a smooth subharmonic function on \(\mathbb{C}\). Assume that 
\begin{equation}\label{e:delta}
	\delta = \delta (\varphi) = \inf_{z \in \mathbb{C}} \mu(z,1)^{-2} \geq 0,
\end{equation}
where $\mu$ is defined in \eqref{e:mu}. {Assume that \(\varphi\) is a polynomial}, then there exist positive constants \(C\) and \(\widetilde{C}\), such that for \(1 \leq q \leq p \leq  \infty\), it holds that 
\begin{equation}\label{e:lp-lq}
	\left\| e^{-t\, \overline{\Box}_{\varphi}} \psi \right\|_{L^p} \leq C t^{-\left(\frac{1}{q} - \frac{1}{p}\right)} e^{ - \widetilde{C}\delta t}\, \left\|\psi \right\|_{L^q}.
\end{equation}
{In the general case, i.e.,  $\varphi$ is not assumed to be a polynomial, then \eqref{e:lp-lq} holds with \(\widetilde{C} = 0\).}
\end{lemma}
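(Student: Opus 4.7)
The plan is to reduce the problem to a Young convolution inequality for a Gaussian-type kernel. First, I would apply Raich's heat kernel estimate (Theorem~\ref{thm:2.1}) with $n=0$ and $|\alpha|=0$, which gives a pointwise bound
\[
|H(t,z,w)| \leq C\, t^{-1} \exp\!\left(-\frac{|z-w|^2}{32 t} - C' t\bigl(\mu(z,1)^{-2} + \mu(w,1)^{-2}\bigr)\right).
\]
Using the definition of $\delta = \delta(\varphi)$, I would bound both $\mu(z,1)^{-2}$ and $\mu(w,1)^{-2}$ from below by $\delta$, so that the second term in the exponent becomes at least $2C'\delta t$. Setting $\widetilde{C} = 2C'$, this factors out as a uniform exponential and leaves a pure Gaussian in $z-w$:
\[
|H(t,z,w)| \leq e^{-\widetilde{C}\delta t}\, G_t(z-w), \qquad G_t(\zeta) := C\, t^{-1} \exp\!\left(-\frac{|\zeta|^2}{32 t}\right).
\]

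Next, from the integral representation \eqref{e:sol}, I obtain
\[
\bigl| e^{-t\,\overline{\Box}_{\varphi}} \psi(z) \bigr| \leq e^{-\widetilde{C}\delta t}\, \bigl(G_t * |\psi|\bigr)(z),
\]
and then I would invoke Young's convolution inequality with exponents satisfying $1 + \tfrac{1}{p} = \tfrac{1}{r} + \tfrac{1}{q}$, which is admissible precisely when $1 \leq q \leq p \leq \infty$. This yields
\[
\bigl\| e^{-t\,\overline{\Box}_{\varphi}} \psi \bigr\|_{L^p} \leq e^{-\widetilde{C}\delta t}\, \|G_t\|_{L^r}\, \|\psi\|_{L^q}.
\]
A direct computation in polar coordinates gives $\|G_t\|_{L^r} = C_r\, t^{-1 + 1/r}$, and since $1 - \tfrac{1}{r} = \tfrac{1}{q} - \tfrac{1}{p}$, the exponent of $t$ is exactly $-(1/q - 1/p)$, delivering \eqref{e:lp-lq}. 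The endpoint cases $r=1$ (i.e.\ $p=q$) and $r=\infty$ (i.e.\ $q=1$, $p=\infty$) are verified directly from the explicit formula for $\|G_t\|_{L^r}$.

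The subtle point, which I expect to be the main obstacle, is the last sentence about general smooth subharmonic $\varphi$ (not a polynomial). Raich's Theorem~\ref{thm:2.1} is only stated for subharmonic nonharmonic polynomials, so I cannot directly invoke the full Gaussian-with-decay bound. However, the same convolution scheme goes through as soon as one has a bare Gaussian upper bound $|H(t,z,w)| \leq C t^{-1} \exp(-|z-w|^2/(32 t))$, with no $\mu$-term. Such a bound can be justified by noting that $\overline{\Box}_{\varphi}$ is a non-negative self-adjoint Schrödinger operator with non-negative magnetic and electric potentials (since $\Delta\varphi \geq 0$), so by the diamagnetic/Kato-type domination its heat kernel is pointwise bounded by the free Euclidean heat kernel on $\mathbb{C}$; alternatively one can approximate $\varphi$ by a sequence of polynomial subharmonic weights and pass to the limit in the estimate, discarding the $\mu$-dependent factor. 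Either way, repeating the Young's inequality argument with $\widetilde{C} = 0$ yields \eqref{e:lp-lq} in full generality.
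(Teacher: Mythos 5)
Your proposal follows essentially the same route as the paper: Raich's kernel estimate with $n=0$, $|\alpha|=0$, bounding both $\mu$-terms below by $\delta$ to extract the factor $e^{-\widetilde{C}\delta t}$ with $\widetilde{C}=2C'$, and then Young's convolution inequality with the explicit computation of the Gaussian's $L^r$ norm. The only difference is in the non-polynomial case, where the paper simply cites Raich's Theorem 25 for the bare Gaussian bound $|H(t,z,w)|\leq \frac{1}{\pi t}\exp(-|z-w|^2/t)$ when $\Delta\varphi\geq 0$, whereas you propose deriving it via diamagnetic domination or approximation; both lead to the same conclusion with $\widetilde{C}=0$.
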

\begin{proof}  The argument below is standard but we include the details for completeness. First, {assume that \(\varphi\) is a polynomial}. We put
 \[ \psi_t(z) = \left(e^{-t\, \overline{\Box}_{\varphi}} \psi\right)(z), \quad N_t(z) =  \exp\left(-\frac{|z|^2}{32t}\right).\]
From the heat kernel estimate \eqref{eq:heatest} (with $\widetilde{C} = 2C'$), we have
\begin{align*}
	\left|\psi_t(z) \right|
	& = \left|\int_{\mathbb{C}} H(t,z,w) \psi (w) dA(w)\right| \\
	& \leq C t^{-1} e^{-\widetilde{C} \delta t} \int_{\mathbb{C}}\exp\left(-\frac{|z-w|^2}{32t}\right)|\psi(w)| dA(w) \\
	& = C t^{-1} e^{-\widetilde{C} \delta t} \left( N_t \ast |\psi|\right).
\end{align*}
Using Young's inequality, we obtain
\begin{align}\label{e:young}
\|\psi_t\|_{L^p} \leq C  t^{-1} e^{-\widetilde{C} \delta t} \|N_t\|_{r} \, \|\psi\|_{L^q},
\end{align}
where 
\(1 + \frac{1}{p} = \frac{1}{r} + \frac{1}{q}\).  To complete the proof, we notice that 
\begin{align*}
\|N_t\|_{r} 
= \left(\int_{\mathbb{C}} \exp\left(-\frac{r|z|^2}{32t}\right)dA(z) \right) ^{1/r}
=
 \left(\frac{32t\pi}{r} \right) ^{1/r}.
\end{align*}
Plugging this into \eqref{e:young} above, we obtain
\begin{equation}\label{key}
	\|\psi_t\|_{L^p} \leq C  t^{-\left(\frac{1}{q} - \frac{1}{p}\right)} e^{-\widetilde{C} \delta t}\, \|\psi\|_{L^q},
\end{equation}
as desired.

{If $\varphi$ is not assumed to be a polynomial, then we have a weaker estimate for the heat kernel. Precisely, it is proved in Raich \cite[Theorem 25]{raich06} that, when \(V: = \Delta \varphi \geq 0\),
\begin{equation}\label{e:hegen}
	|H(t,z,w)| \leq \frac{1}{\pi t} \exp\left({-\frac{|z-w|^2}{t}}\right).
\end{equation}
From this estimate, we can also prove \eqref{e:lp-lq} for \(\widetilde{C} = 0\). The details are left to the readers.} The proof is complete.
\end{proof}
\begin{remark}
\begin{enumerate}
\renewcommand{\labelenumi}{(\roman{enumi})}

\item
It is worth pointing out that when \(\varphi = 0\), the operator \(\overline{\Box}_{\varphi}\) is (up to a constant) the Laplace operator on \(\mathbb{C} \simeq \mathbb{R}^2\) for which the \(L^p\)--\(L^q\) estimate above is a special case of the well-known \(L^p\)--\(L^q\) estimates for the heat semigroup on \(\mathbb{R}^n\). More generally, when \(\delta(\varphi) = 0\) (for example, when \(\varphi\) is harmonic), our estimate is very similar to its \(\mathbb{R}^n\) counterpart as well as in the setting of Heisenberg group $\mathbb{H}^n$, see Oka \cite{Oka2018}. The interesting case is, of course, when \(\varphi\) is non-harmonic. Note that, the $L^p$--$L^q$ estimate for the heat operator associated to the sub-Laplace operator in Oka \cite{Oka2018} was established for Heisenberg group of real dimension $n\geqslant 3$, meanwhile our estimate is valid on two dimension since $\mathbb{C}\simeq \mathbb{R}^2$.  

\item
It is not unexpected that when \(\delta(\varphi) > 0\) (e.g., when \(\varphi(z) = |z|^2\)) our estimate is stronger than its real counterpart. In particular, in this case we get the $L^p$--$L^q$ estimates with \emph{exponential} decay rates.
\end{enumerate}
\end{remark}

\section{Well-posedness and asymptotic behavior: Proofs}\label{S3}
In this section, we prove the global-in-time well-posedness of the equation \eqref{e:heateq0} and the asymptotic behavior of its mild solutions.
Let us recall that the mild solution to \eqref{e:heateq0} is defined as the solution of the integral equation
\begin{equation}\label{e:key}
		u(t) 
		= e^{-t\overline{\Box}_\varphi} u_0 + \int_0^t e^{-(t-s)\overline{\Box}_\varphi}f(s,u(s)) ds.
\end{equation}

The local and global well-posedness of heat equations (with the same nonlinearity as the right hand side of equation \eqref{e:heateq0}) in Euclidean space $\mathbb{R}^n$ were done by Weissler in \cite{We1980} and \cite{We1981}, respectively. It might be possible to use the same methods in these works to establish the well-posedness results. However, our approach is to pass from a ``linearized'' integral equation to the original \eqref{e:heateq0} via a fixed point argument.

In the two proofs below, we shall use the following functions:
\begin{enumerate}
\renewcommand{\labelenumi}{(\roman{enumi})}
	\item
	The usual Gamma function \(\Gamma\) is defined by $\Gamma(\theta) = \int_0^\infty s^{\theta-1}e^{-s}ds$, which is finite for $0<\theta<1$.
	
	\item
	The Beta function is defined by  $\mathrm{B}(k,l)= \int_0^1(1-s)^{k-1}s^{l-1}ds$, which is finite for
	all $k>0$ and $l>0$. Observe that for $k,\, l <1$ and $t>0$, the change of variable $s\to st$ yields
	\begin{align}\label{beta}
		t^{k+l-1}\int_0^t(t-s)^{-k}s^{-l}ds &= t^{k+l-1}t^{1-k-l}\int_0^1(1-s)^{-k}s^{-l}ds \notag\\
		&= \mathrm{B}(1-k,1-l)<\infty.
	\end{align}
\end{enumerate}
\subsection{Proof of Theorem~\ref{case1}}

First, for each \(v \in Y\), we consider the following integral \begin{equation}\label{le:weak}
u(t) = e^{-t \overline{\Box}_{\varphi} } u_0 + \int_0^t e^{-(t-s) \overline{\Box}_{\varphi} }f(s,v(s)) ds.
\end{equation}
We shall prove the boundedness of right hand-side (RHS) of \eqref{le:weak} in with the norm $\norm{\cdot}_Y$. From the $L^p$--$L^q$ estimates \eqref{e:lp-lq} and H\"older's inequality we have
\begin{align*}
\|\hbox{RHS of } & \hbox{\eqref{le:weak}}\|_{L^q(\mathbb{C})} \leq  
\norm{e^{-t\overline{\Box}_\varphi}u_0}_{L^q(\mathbb{C})} + \norm{\int_0^t e^{-(t-s)\overline{\Box}_\varphi}f(s,v(s)) ds}_{L^q(\mathbb{C})}\cr
&\leq  C  t^{-\left(\frac{1}{m-1}-\frac{1}{q} \right)}\norm{u_0}_{L^{m-1}(\mathbb{C})} + \int_0^t \norm{e^{-(t-s)\overline{\Box}_\varphi}f(s,v(s))}_{L^q(\mathbb{C})} ds\cr
&\leq  C  t^{-\left(\frac{1}{m-1}-\frac{1}{q} \right)}\norm{u_0}_{L^{m-1}(\mathbb{C})} + C  \int_0^t (t-s)^{-\left(\frac{m}{q}-\frac{1}{q}\right)} \norm{f(s,v(s))}_{L^{\frac{q}{m}}(\mathbb{C})} ds\cr
&\leq  C  t^{-\left(\frac{1}{m-1}-\frac{1}{q} \right)}\norm{u_0}_{L^{m-1}(\mathbb{C})} + C  \int_0^t (t-s)^{-\left(\frac{m}{q}-\frac{1}{q}\right)} \norm{v(s)}^m_{L^{q}(\mathbb{C})} ds\\
&=  C  t^{-\left(\frac{1}{m-1}-\frac{1}{q} \right)}\norm{u_0}_{L^{m-1}(\mathbb{C})} + C  \int_0^t (t-s)^{-\frac{m-1}{q}} \norm{v(s)}^m_{L^{q}(\mathbb{C})} ds,
\end{align*}
where the constant \(C \) does not depend on \(q\) or \(m\). 

Set $\omega(t) = t^{\frac{1}{m-1}-\frac{1}{q}}\norm{v(t)}_{L^q(\mathbb{C})}$. 
Multiplying both sides of above inequality with $t^{\frac{1}{m-1}-\frac{1}{q}}$, we get
$t^{\frac{1}{m-1}-\frac{1}{q}}$, we get
\begin{align}\label{core}
t^{\frac{1}{m-1}-\frac{1}{q}}\norm{u}_{L^q(\mathbb{C})} &\leq  C  \norm{u_0}_{L^{m-1}} + C  t^{\frac{1}{m-1}-\frac{1}{q}}\int_0^t (t-s)^{-\frac{m-1}{q}}s^{-\left( \frac{m}{m-1}-\frac{m}{q}\right)} (\omega(s))^m ds \notag \\
&\leq  C  \norm{u_0}_{L^{m-1}(\mathbb{C})} + C  (\sup_{t>0}\omega(t))^m t^{\frac{1}{m-1}-\frac{1}{q}} \int_0^t (t-s)^{-\frac{m-1}{q}}s^{-\left( \frac{m}{m-1}-\frac{m}{q}\right)} ds \notag \\
&\leq  {C  \norm{u_0}_{L^{m-1}(\mathbb{C})} + C \mathrm{B}\left( 1 - \gamma - \nu, 1 - m \nu\right)\norm{v}_Y^m \hbox{   (we used \eqref{beta})}}\notag\\
&\leq  {C  \norm{u_0}_{L^{m-1}(\mathbb{C})} + C \mathrm{B}\left( 1 - \gamma - \nu, 1 - m \nu\right)\norm{v}_Y^m }\notag\\
&\leq  C  \norm{u_0}_{L^{m-1}(\mathbb{C})} + \widehat{C}\norm{v}_Y^m,
\end{align}
where \(0 < \gamma = \frac{m}{q} - \frac{1}{m-1} < 1\) and \(\nu = \frac{1}{m-1} - \frac{1}{q} > 0 \), provided {that} $1<m-1<q<m(m-1)$. {Note that \(0 < \gamma + \nu <1\) and \(0 < m \nu <1\)}. Moreover, we have also that
\begin{align}\label{BoundLm}
\norm{u(t)}&_{L^{m-1}(\mathbb{C})} \leq 
\norm{e^{-t\overline{\Box}_\varphi}u_0}_{L^{m-1}(\mathbb{C})} + \norm{\int_0^t e^{-(t-s)\overline{\Box}_\varphi}f(s,v(s)) ds}_{L^{m-1}(\mathbb{C})}\cr
&\leq  C  \norm{u_0}_{L^{m-1}(\mathbb{C})} + \int_0^t \norm{e^{-(t-s)\overline{\Box}_\varphi}f(s,v(s))}_{L^{m-1}(\mathbb{C})} ds\cr
&\leq  C  \norm{u_0}_{L^{m-1}(\mathbb{C})} + C  \int_0^t (t-s)^{-\left(\frac{m}{q}-\frac{1}{m-1}\right)} \norm{f(s,v(s))}_{L^{\frac{q}{m}}(\mathbb{C})} ds\cr
&\leq  C \norm{u_0}_{L^{m-1}(\mathbb{C})} + C  \int_0^t (t-s)^{-\left(\frac{m}{q}-\frac{1}{m-1}\right)} \norm{v(s)}^m_{L^{q}(\mathbb{C})} ds\cr
&\leq  C  \norm{u_0}_{L^{m-1}(\mathbb{C})} + C  \int_0^t (t-s)^{-\left(\frac{m}{q}- \frac{1}{m-1}\right)} s^{-\frac{m}{m-1}+\frac{m}{q}}\left(s^{\frac{1}{m-1}-\frac{1}{q}}\norm{v(s)}_{L^{q}(\mathbb{C})}\right)^m ds\cr
&\leq  C  \norm{u_0}_{L^{m-1}(\mathbb{C})} + C  \left(\sup_{t>0}t^{\frac{1}{m-1}-\frac{1}{q}}\norm{v(t)}_{L^{q}(\mathbb{C})}\right)^m \times \notag \\
& \qquad \times \int_0^t (t-s)^{-\left(\frac{m}{q}- \frac{1}{m-1}\right)} s^{-\frac{m}{m-1}+\frac{m}{q}} ds\notag \\
&\leq  C  \norm{u_0}_{L^{m-1}(\mathbb{C})} + C  \left(\sup_{t>0}t^{\frac{1}{m-1}-\frac{1}{q}}\norm{v(t)}_{L^{q}(\mathbb{C})}\right)^m{\mathrm B}\left( \gamma,1-\gamma \right),
\end{align}
where $0<\gamma= \frac{m}{q}-\frac{1}{m-1}<1$ for $1<m-1<q<m(m-1)$ and $\mathrm{B}(\cdot,\cdot)$ is the beta function.

The boundedness \eqref{core} and \eqref{BoundLm} leads to the boundedness of RHS of \eqref{le:weak} with norm $\norm{\cdot}_Y$. From this boundedness, we see that, for a given $v\in Y$, the solution operator $\Phi: Y \to Y$ associated with integral equation \eqref{le:weak} that is given by
\begin{equation}
\Phi(v)(t):=u(t) = e^{-t\overline{\Box}_\varphi} u_0 + \int_0^t e^{-(t-s)\overline{\Box}_\varphi}f(s,v(s)) ds,
\end{equation}
is well-defined. Now, we prove that $\Phi$ is a contraction from $\mathbb\mathrm{B}(0,\rho)$ to $\mathbb\mathrm{B}(0,\rho)$, where $\mathbb\mathrm{B}(0,\rho)$ is a ball in Y centered at $0$ with radius $\rho>0$. Indeed, similar to \eqref{core} we have
\begin{align}\label{e:core1}
\norm{\Phi}_Y &\leq  C \norm{u_0}_{L^{m-1}(\mathbb{C})} + \left(\widehat{C} + C \mathrm{B}(\gamma,1-\gamma) \right)\norm{v}_Y^m \cr
&\leq  C \norm{u_0}_{L^{m-1}(\mathbb{C})} + \left(\widehat{C}+ C \mathrm{B}(\gamma,1-\gamma)\right)\rho^m \notag \\
& \leq  \rho,
\end{align}
provided that $\norm{u_0}_{L^{m-1}(\mathbb{C})}$ and $\rho$ are small enough (as above, \(\mathrm{B}\) is the beta function). 

On the other hand, for $v_1,v_2\in \mathbb\mathrm{B}(0,\rho)$ we have
\begin{align}\label{e:co}
&\norm{\Phi(v_1)(t)-\Phi(v_2)(t)}_{L^q(\mathbb{C})} \leq  \int_0^t \norm{e^{-(t-s)\overline{\Box}_\varphi}(f(s,v_1(s)-f(s,v_2(s))))}_{L^q(\mathbb{C})}ds\notag \\
&\qquad \leq  C  \int_0^t (t-s)^{-\frac{m-1}{q}}\left( \norm{v_1(s)}^{m-1}_{L^q(\mathbb{C})}+\norm{v_2(s)}^{m-1}_{L^q(\mathbb{C})}\right)\norm{v_1(s)-v_2(s)}_{L^q(\mathbb{C})}ds.
\end{align}
This leads to
\begin{align}\label{e:core2}
t^{\frac{1}{m-1}-\frac{1}{q}}\norm{\Phi(v_1)-\Phi(v_2)}_{L^q(\mathbb{C})} &\leq  2C \rho^{m-1}\norm{v_1-v_2}_{Y}t^{\frac{1}{m-1}-\frac{1}{q}}\int_0^t (t-s)^{-\frac{m-1}{q}} s^{-\left(\frac{m}{m-1}-\frac{m}{q} \right)}ds \notag \\
&\leq  2\widehat{C}\rho^{m-1}\norm{v_1-v_2}_{Y},
\end{align}
{where $\widehat{C}$ is given as in \eqref{core}.}

Moreover, similarly to \eqref{BoundLm}, we have also that
\begin{align}\label{e:co1}
\|\Phi(v_1)(t) & - \Phi(v_2)(t)\|_{L^{m-1}(\mathbb{C})} \notag \\
&  \leq  \int_0^t \norm{e^{-(t-s)\overline{\Box}_\varphi}(f(s,v_1(s)-f(s,v_2(s))))}_{L^{m-1}(\mathbb{C})}ds\notag \\
&\leq  C  \int_0^t (t-s)^{-\left(\frac{m}{q}-\frac{1}{m-1}\right)}\norm{f(s,v_1(s))-f(s,v_2(s))}_{L^{\frac{q}{m}}(\mathbb{C})}ds\notag \\
&\leq  C  \int_0^t (t-s)^{-\left(\frac{m}{q}-\frac{1}{m-1}\right)}\left( \norm{v_1(s)}^{m-1}_{L^q(\mathbb{C})} + \norm{v_2(s)}^{m-1}_{L^q(\mathbb{C})}  \right) \times \notag \\ 
& \qquad \times \norm{v_1(s)-v_2(s)}_{L^{q}(\mathbb{C})}ds \notag \\ 
&\leq  2C  \rho^{m-1}\norm{v_1-v_2}_Y \mathrm{B}(\gamma,1-\gamma).
\end{align}
Combining inequalities \eqref{e:core2} and \eqref{e:co1}, we get
\begin{equation}\label{e:conti}
\norm{\Phi(v_1)-\Phi(v_2)}_{Y} \leq  2\rho^{m-1}\left(\widehat{C} + C \mathrm{B}(\gamma,1-\gamma)  \right)\norm{v_1-v_2}_Y.
\end{equation}
This shows that $\Phi$ is Lipschitz continuous for small enough $\rho$.

From \eqref{e:core1} and \eqref{e:conti}, we obtain that $\Phi$ is a contraction. By the standard  fixed point argument, $\Phi$ has a fixed point $\hat{u}$ which is also mild solution of \eqref{e:heateq0}. The uniqueness follows from \eqref{e:conti}.

(ii) Since $\norm{u(0)-\hat{u}(0)}_{L^{m-1}(\mathbb{C})}$ and $\norm{\hat{u}(0)}_{L^{m-1}(\mathbb{C})}$ are small enough, the norm $\norm{u(0)}_{L^{m-1}(\mathbb{C})}$ is bounded and can be small enough to guarantee the existence of $u(t)$. Moreover, there exists a positive constant $\widehat{\rho}$ such that $\norm{u}_Y\leq  \widehat{\rho}$.

Now, by the same estimates as \eqref{e:co} and \eqref{e:core2}, we have
\begin{align*}
t^{\frac{1}{m-1} - \frac{1}{q}} & \norm{u-\hat u}_{L^q(\mathbb{C})}
\leq  C \norm{u(0)-\hat{u}(0)}_{L^{m-1}(\mathbb{C})} \\
& \qquad +  t^{\frac{1}{m-1}-\frac{1}{q}}\int_0^t \norm{e^{-(t-s)\overline{\Box}_\varphi}(f(s,u(s)-f(s,\hat{u}(s))))}_{L^q(\mathbb{C})}ds \\
&\leq  C  \norm{u(0)-\hat{u}(0)}_{L^{m-1}(\mathbb{C})} + C   t^{\frac{1}{m-1}-\frac{1}{q}}\int_0^t (t-s)^{-\frac{m-1}{q}} \times \\
& \qquad \times \left( \norm{u(s)}^{m-1}_{L^q(\mathbb{C})}+\norm{\hat{u}(s)}^{m-1}_{L^q(\mathbb{C})}\right)\norm{u(s)-\hat{u}(s)}_{L^q(\mathbb{C})}ds \\
&\leq  C  \norm{u(0)-\hat{u}(0)}_{L^{m-1}(\mathbb{C})}+ C  (\rho^{m-1}+\widehat{\rho}^{\,m-1})t^{\frac{1}{m-1}-\frac{1}{q}} \times \\
& \qquad \times \int_0^t (t-s)^{-\frac{m-1}{q}}s^{-\left( \frac{m}{m-1}-\frac{m}{q} \right)}\left(s^{\left( \frac{m}{m-1}-\frac{m}{q} \right)}\norm{u(s)-\hat{u}(s)}_{L^q(\mathbb{C})}\right)ds.
\end{align*}
{By using the following boundedness estimate (see \eqref{beta}):
\[t^{\frac{1}{m-1}-\frac{1}{q}}\int_0^t (t-s)^{-\frac{m-1}{q}}s^{-\left( \frac{m}{m-1}-\frac{m}{q} \right)}ds = \mathrm{B}\left(\gamma, 1 - \gamma - \nu \right) <\infty,\] where, as above, \(\gamma = \frac{m}{q} - \frac{1}{m-1}\) and \(\nu = \frac{1}{m-1} - \frac{1}{q}\) (which satisfy \(0<\gamma + \nu < 1\)),} and Gronwall's inequality, we get
\begin{equation}
t^{\frac{1}{m-1}-\frac{1}{q}}\norm{u(t)-\hat{u}(t)}_{L^q(\mathbb{C})} \leq  {D}\norm{u(0)-\hat{u}(0)}_{L^{m-1}(\mathbb{C})},
\end{equation}
for some positive constant $D$. This leads to the polynomial stability estimate \eqref{e:pstable}. The proof is complete.

\subsection{Proof of Theorem~\ref{case2}}

(i) As before, for each \(v \in Y\), we consider the following integral equation \begin{equation}\label{le:weak2}
u(t) = e^{-t \overline{\Box}_{\varphi} } u_0 + \int_0^t e^{-(t-s) \overline{\Box}_{\varphi} }f(s,v(s)) ds.
\end{equation}
We prove that 
\begin{equation}\label{e:bounded2}
\norm{u}_{BC(\mathbb{R}_+,L^{n}(\mathbb{C}))}\leq  C\norm{u_0}_{L^{n}(\mathbb{C})} + \overline{C}\norm{v}^m_{BC(\mathbb{R}_+,L^{n}(\mathbb{C}))},
\end{equation}
where $C$ and $\overline{C}$ are  positive constants. Indeed, by using the $L^p$--$L^q$ estimate \eqref{e:lp-lq} (with $p=q=n$ for the first term and $p=n,\, q=\frac{n}{m}$ for the second term of RHS of \eqref{le:weak2}) and  H\"older's inequality, we have
\begin{align*}
\|\hbox{RHS of } & \eqref{le:weak2}\|_{L^n(\mathbb{C})} \leq  \norm{e^{-t \overline{\Box}_{\varphi} } u_0}_{L^{n}(\mathbb{C})}  + \int_0^t \norm{e^{-(t-s) \overline{\Box}_{\varphi} }f(s,v(s))}_{L^{n}(\mathbb{C})} ds \\
&\leq  C \norm{u_0}_{L^{n}(\mathbb{C})} + C\int_0^t (t-s)^{-\left(\frac{m}{n}-\frac{1}{n}\right)}e^{-\widetilde{C}\delta (t-s)}\norm{f(s,v(s))}_{L^{\frac{n}{m}}(\mathbb{C})}ds \cr
&\leq  C\norm{u_0}_{L^{n}(\mathbb{C})} + C\norm{v}^m_{BC(\mathbb{R}_+,L^{n}(\mathbb{C}))}\int_0^t (t-s)^{-\frac{m-1}{n}}e^{- \widetilde{C}\delta (t-s)}ds \cr
&\leq  {C\norm{u_0}_{L^{n}(\mathbb{C})} + C\norm{v}^m_{BC(\mathbb{R}_+,L^{n}(\mathbb{C}))}\int_0^t s^{-\frac{m-1}{n}}e^{- \widetilde{C}\delta s}ds} \cr
&\leq {C\norm{u_0}_{L^{n}(\mathbb{C})} + C\norm{v}^m_{BC(\mathbb{R}_+,L^{n}(\mathbb{C}))}\int_0^\infty s^{-\frac{m-1}{n}}e^{-\widetilde{C}\delta s}ds} \cr
&\leq  C\norm{u_0}_{L^{n}(\mathbb{C})} + C(\widetilde{C}\delta)^{\frac{m-1}{n}}{\Gamma}(1-\theta) \norm{v}^m_{BC(\mathbb{R}_+,L^{n}(\mathbb{C}))} < +\infty,
\end{align*}
where $0<\theta = \frac{m-1}{n}<1$ and ${\Gamma}$ means the gamma function.

By the boundedness of \eqref{e:bounded2}, for each $v\in B(0,\rho)\subset BC(\mathbb{R}_+,L^n(\mathbb{C}))$, we can define a solution operator 
\[\Phi: BC(\mathbb{R}_+,L^n(\mathbb{C})) \to BC(\mathbb{R}_+, L^n(\mathbb{C}))\]
associating with equation \eqref{le:weak} that is given by
\begin{equation}
\Phi(v)(t) := u(t)= e^{-t \overline{\Box}_{\varphi} } u_0 + \int_0^t e^{-(t-s) \overline{\Box}_{\varphi} }f(s,v(s)) ds.
\end{equation}
Below, we prove that $\Phi$ is a contraction from $B(0,\rho)$ to $B(0,\rho)$ for $\rho$ and $\norm{u_0}_{L^n(\mathbb{C})}$ small enough. Then, by the standard fixed point argument, we obtain the unique mild solution of equation \eqref{e:heateq0} in $B(0,\rho)$. In particular, for $v\in B(0,\rho)$ by the same estimates in Assertion (i), we have
\begin{align}\label{core1}
\norm{\Phi(v)}_{BC(\mathbb{R}_+,L^n(\mathbb{C}))} &\leq  C\norm{u_0}_{L^n(\mathbb{C})} + C(\widetilde C\delta)^{\frac{m-1}{n}}{\Gamma}(1-\theta) \norm{v}^m_{BC(\mathbb{R}_+,L^{n}(\mathbb{C}))} \notag\\
&\leq  C\norm{u_0}_{L^n(\mathbb{C})} + C(\widetilde C\delta)^{\frac{m-1}{n}}{\Gamma}(1-\theta) \rho^m \notag \\
& \leq  \rho
\end{align}
provided that $\norm{u_0}_{L^n(\mathbb{C})}$ and $\rho$ are small enough. This shows that $\Phi$ maps $B(0,\rho)$ into $B(0,\rho)$.

Now, for $v_1, v_2\in B(0,\rho)$ we can estimate
\begin{align}\label{core2}
\|\Phi(v_1)& - \Phi(v_2)\|_{BC(\mathbb{R}_+,L^n(\mathbb{C}))} \notag \\
&\leq  \int_0^t \norm{ e^{-(t-s)\overline{\Box}_\varphi}\left( |v_1(s)|^{m-1}v_1(s) - |v_2(s)|^{m-1}v_2(s) \right)}_{L^n(\mathbb{C})} ds \notag \\
&\leq  C\int_0^t (t-s)^{-\frac{m-1}{n}}e^{-\widetilde{C}\delta (t-s)}\norm{|v_1(s)|^{m-1}v_1(s) - |v_2(s)|^{m-1}v_2(s)}_{L^{\frac{n}{m}}(\mathbb{C})}ds \notag \\
&\leq  C\int_0^t (t-s)^{-\left(\frac{m}{n}-\frac{1}{n}\right)}e^{-\widetilde{C}\delta (t-s)}\norm{|v_1(s) - v_2(s)|(|v_1(s)|^{m-1}+v_2(s)|^{m-1})}_{L^{\frac{n}{m}}(\mathbb{C})}ds \notag \\
&\leq  C\int_0^t (t-s)^{-\frac{m-1}{n}}e^{-\widetilde{C}\delta (t-s)}\norm{v_1(s) - v_2(s)}_{L^n(\mathbb{C})}\left(\norm{v_1(s)}_{L^n(\mathbb{C})}^{m-1}+\norm{v_2(s)}_{L^n(\mathbb{C})}^{m-1}\right)ds \notag \\
&\leq  {2C\rho^{m-1}\norm{v_1-v_2}_{BC(\mathbb{R}_+, L^n(\mathbb{C}))}\int_0^t s^{-\frac{m-1}{n}}e^{-\widetilde{C}\delta s}ds}\notag \\
&\leq {2C\rho^{m-1}\norm{v_1-v_2}_{BC(\mathbb{R}_+, L^n(\mathbb{C}))}\int_0^\infty s^{-\frac{m-1}{n}}e^{-\widetilde{C}\delta s}ds}\notag \\
&\leq  2C\rho^{m-1}(\widetilde{C}\delta)^{\frac{m-1}{n}}{\Gamma}(1-\theta)\norm{v_1-v_2}_{BC(\mathbb{R}_+,L^n(\mathbb{C}))}.
\end{align}
Hence,  $\Phi$ is Lipschitz continuous if $\rho$ is small enough. 

Using  \eqref{core1} and \eqref{core2}, we obtain that $\Phi$ is a contraction, provided that $\norm{u_0}_{L^n(\mathbb{C})}$ and $\rho$ are small enough. This shows that there is a fixed point $\hat{u}$ of $\Phi$ (in the small ball $B(0,\rho)$) which is also a mild solution of equation \eqref{e:heateq0}. The uniqueness follows immediately from \eqref{core2}.

(ii) We prove this assertion by using Gronwall inequality. Indeed, since $\norm{\hat{u}(0)}_{L^n(\mathbb{C})}$ and $\norm{u(0)-\hat{u}(0)}_{L^n(\mathbb{C})}$ are small enough, we have $\norm{u(0)}_{L^n(\mathbb{C})}$ is also small enough and by the same way as in Assertion (i), there exists a positive constant $\widetilde{\rho}$ such that we have the boundedness $\norm{u(t)}_{L^n(\mathbb{C})}\leq  \widetilde{\rho}$.

Now, by using the $L^p$--$L^q$ estimate \eqref{e:lp-lq}, we have
\begin{align*}
\norm{u(t)- \hat u(t)}_{L^n(\mathbb{C})} 
&\leq  \norm{e^{-t\overline{\Box}_\varphi}(u(0)-\hat{u}(0))}_{L^{n}(\mathbb{C})} \notag \\
& \quad + \int_0^t \norm{ e^{-(t-s)\overline{\Box}_\varphi}\left( |u(s)|^{m-1}u(s) - |\hat{u}(s)|^{m-1}\hat{u}(s) \right)}_{L^n(\mathbb{C})} ds\cr
&\leq  Ce^{-\widetilde{C}\delta t}\norm{u(0)-\hat{u}(0)}_{L^{n}(\mathbb{C})} + C\int_0^t (t-s)^{-\frac{m-1}{n}}e^{-\widetilde C\delta (t-s)} \times \notag \\ 
& \quad \times \norm{u(s) - \hat{u}(s)}_{L^n (\mathbb{C})} \left(\norm{u(s)}_{L^n(\mathbb{C})}^{m-1}+\norm{\hat{u}(s)}_{L^n(\mathbb{C})}^{m-1}\right)ds \cr
&\leq  Ce^{-\widetilde{C}\delta t}\norm{u(0)-\hat{u}(0)}_{L^{n}(\mathbb{C})} +\cr
& \quad +C\left(\rho^{m-1} + \widetilde{\rho}^{\,m-1}\right)\int_0^t (t-s)^{-\frac{m-1}{n}}e^{-\widetilde{C}\delta (t-s)} \norm{u(s)- \hat{u}(s)}_{L^n(\mathbb{C})}ds.
\end{align*}
Setting $w(t) = e^{\sigma t} \norm{u(t)-\hat{u}(t)}_{L^n(\mathbb{C})}$ with $0<\sigma<\widetilde{C} \delta$, we obtain that
\begin{equation*}
w(t) \leq  C w(0) +C\left(\rho^{m-1} + \widetilde{\rho}^{\,m-1}\right) \int_0^t(t-s)^{-\frac{m-1}{n}}e^{-(\widetilde{C}\delta-\sigma) (t-s)} w(s)ds.
\end{equation*}
Using the fact that $\int_0^t (t-s)^{-\frac{m-1}{n}}e^{-(\widetilde{C}\delta-\sigma)(t-s)}ds$ is bounded by $(\widetilde{C}\delta-\sigma)^{\frac{m-1}{n}}{\Gamma}(1-\theta)$ and Gronwall's inequality we get 
\[ w(s) \leq {\widetilde{D}}w(0). \]
This leads to the asymptotic stability \eqref{e:estable}, as desired. We complete the proof.

\section{Examples}\label{S4}
In this sections, we construct two examples to illustrate the cases \(\delta(\varphi) = 0\) and \(\delta(\varphi)> 0 \). Our examples are motivated by  well-known models for finite and infinite type pseudoconvex real hypersurfaces in \(\mathbb{C}^2\).
\begin{example}
We give an example of subharmonic function \(\varphi\) for which \(\delta(\varphi) > 0\). A typical one is \(\varphi(z) = |z|^2\), but there are more general examples. Indeed, this is the case for any subharmonic \textit{polynomial} \(\varphi\) which is non-harmonic: For such \(\varphi\), there exist \(j_0 \geq 1\) and \(k_0 \geq 1\) such that
\[ 
a^z_{j_0k_0} = \frac{1}{j_0! k_0!}\frac{\partial^{j_0+k_0} \varphi }{\partial z^{j_0} \partial \bar{z}^{k_0}}(z) = c,
 \]
which is a non-zero constant (does not depend on \(z\)).
Thus,
\[ 
	\mu(z,r) =\inf_{j,k\geq 1} \left|\frac{r}{a^z_{jk}}\right|^{1/(j+k)} \leq  \left|\frac{r}{c}\right|^{1/(j_0 + k_0)}.
 \]
Therefore,
\[ 
	\delta(\varphi) = \inf_{\mathbb{C}} \mu(z,1)^{-2} \geq |c|^{2/(j_0+k_0)} > 0,
 \]
as desired. Consequently, Theorem~\ref{case2} with \textit{exponential} stability estimate applies in this case.
\end{example}
\begin{example} Let \(g(t)\) be a smooth convex function on \(\mathbb{R}\) such that \(g(t) = 0 \) for \(t \leq 0\) while \(g(t)\) is increasing for \(t > 0\). For instance, let us take \(h(t) = \exp(-1/t^2)\) on \([0,1/2]\) and, to obtain \(g\), extend it to a convex increasing function on \(\mathbb{R}\) which vanishes on the negative half line.

Let \(\varphi(z) = g(|z|^2)\). By the convexity and monotonicity of \(g\), it follows that \(\varphi(z)\) is subharmonic and non-harmonic. On the other hand, since \(g\) has all derivative vanishing at \(0\), we have that \(a^0_{jk} = 0\) for all \(j , k \geq 1\). Thus, \(\mu(0, r) = + \infty\) for all \(r>0\) and thus \(\delta(\varphi) = 0\). Nevertheless, our Theorem~\ref{case1} still applies: The well-posedness of \eqref{e:heateq0} holds with \textit{polynomial} stability estimate.
\end{example}
 
\end{document}